\documentclass[11pt,reqno]{amsart} 
\usepackage{amsmath,tikz}
\usepackage[utf8]{inputenc}
\usepackage{hyperref}
\usepackage{amsmath}
\usepackage{amssymb}
\usepackage{amsthm}
\usepackage{mathtools}
\usepackage{verbatim}
\usepackage{xcolor}
\usepackage{fullpage}
\usepackage{soul}
\usepackage{caption}
\pagestyle{plain}
\usepackage{graphicx}
\graphicspath{ {./images/} }
\usepackage{multicol}
\usepackage{setspace}


\DeclareMathOperator\inv{\text{inv}}
\DeclareMathOperator\negative{\text{neg}}
\DeclareMathOperator\nsp{\text{nsp}}

 %



\newcommand{\id}{\textbf{e}}


\newtheorem{theorem}{Theorem}[section]
\newtheorem{corollary}[theorem]{Corollary}
\newtheorem{proposition}[theorem]{Proposition}

\newtheorem{lemma}[theorem]{Lemma}
\newtheorem{remark}[theorem]{Remark}

\theoremstyle{definition}
\newtheorem{definition}[theorem]{Definition}

\newtheorem{example}[theorem]{Example}

\setcounter{MaxMatrixCols}{20}



\newcommand*\xbar[1]{%
  \hbox{%
    \vbox{%
      \hrule height 0.5pt 
      \kern0.5ex
      \hbox{%
        \kern-0.1em
        \ensuremath{#1}%
        \kern-0.1em
      }%
    }%
  }%
}


\title[Metrics on Signed Permutations with the Same Peak Set]{Metrics on Signed Permutations \\ with the Same Peak Set}

\author{Kayla Andrus}
\address[K.~Andrus]{San Diego State University, San Diego, CA 92115}
\email{\textcolor{blue}{\href{mailto:}kayla@andrus-family.com}}

\author{Nathaniel Larsen}
\address[N.~Larsen]{San Diego State University, San Diego, CA 92115}
\email{\textcolor{blue}{\href{mailto:}nathanplarsen@gmail.com}}

\author{Alyssa MacLennan}
\address[A.~MacLennan]{San Diego State University, San Diego, CA 92115}
\email{\textcolor{blue}{\href{mailto:}alyssamaclennan@gmail.com}}

\author{Gordon Rojas Kirby}
\address[G.~Rojas Kirby]{Department of Mathematics and Statistics, San Diego State University, CA 92182}
\email{\textcolor{blue}{\href{mailto:gkirby@sdsu.edu}{gkirby@sdsu.edu}}}
\thanks{}
 
\author{Mariana Smit Vega Garcia}
\address[M.~Smit Vega Garcia]{Department of Mathematics, Western Washington University, Bellingham, WA 98225}
\email{\textcolor{blue}{\href{mailto:}{smitvem@wwu.edu}}}
\thanks{M.S.V.G was partially supported by the NSF grant DMS-2348739 and a Karen EDGE fellowship}

\author{Christian Vicars}
\address[C.~Vicars]{San Diego State University, San Diego, CA 92115}
\email{\textcolor{blue}{\href{mailto:}cvicars21@gmail.com}}

\begin{document}
\begin{abstract}
Let $S^B_n$ be the Coxeter group of type B. We denote the set of indices where $\sigma\in S^B_n$ has a peak as $Peak(\sigma)$ and let $P^{B}(S;n)=\{\sigma \in S^{B}_n~|~ Peak(\sigma)=S\}$. In \cite{metrics}, Diaz-Lopez, Haymaker, Keough, Park and White considered metrics for unsigned permutations with the same peak set. In this paper, we generalize their result by studying Hamming, $l_{\infty}$, and the word metrics on $P^{B}(S;n)$ for all $S$. We also determine
the minimum and maximum possible values that these metrics can achieve in these subsets of $S^B_n$.
\end{abstract}
\maketitle

\section{Introduction}
In this paper, we consider signed permutations of the Coxeter group of type $B$. Our main results determine the minimum and maximum distances that can be achieved between two signed permutations with the same peak set. We use $S^B_n$ to denote the \textbf{Coxeter group of type $B_n$}, that is, $S^B_n$ consists of all bijections from the set $\{ -n, \ldots, -2,-1 , 1, 2, \ldots , n \}$ to itself, such that if $\sigma(i)=j$, then $\sigma(-i) = -j$. To simplify the notation, we often write $-j=\bar{j}$, and write our permutations in one-line notation as $\sigma = \sigma(1)\sigma(2)\cdots\sigma(n)$. Notice that if we take the absolute value of every entry in the one-line notation of a signed permutation of $S^B_n$, all values of $\{1,2,...,n\}$ will appear exactly once. We say that a permutation $\sigma = \sigma(1)\sigma(2)\cdots\sigma(n) \in S^B_n$ has a \textbf{peak} at position $i \in \{2,3 \ldots , n-1 \} $ when $\sigma(i-1) < \sigma(i) > \sigma(i+1)$. We then define the \textbf{peak set} of $\sigma$, denoted $Peak(\sigma)$, as the set of all the indices in which $\sigma$ has a peak. Finally, we notate the collection of signed permutations that possess the same peak set $S$ as $P^{B}(S;n)=\{\sigma \in S^{B}_n~|~ Peak(\sigma)=S\}$. When there exists at least one $\sigma\in S^B_n$ with $Peak(\sigma)=S$, we say $S$ is an admissible peak set.

To exemplify these definitions, consider $\sigma = \bar{5}2\bar{4}3\bar{1} \in S^B_5$. This means that $\sigma$ maps $1 \mapsto \bar{5}, 2 \mapsto 2, 3 \mapsto \bar{4}, 4 \mapsto 3,$ and $5 \mapsto \bar{1}$. 
Since $\bar{5}<2>\bar{4}$, $i=2$ is a peak. Similarly, $i=4$ is a peak, since $\bar{4} < 3 > \bar{1}$. Since $2\not< \bar{4} \not> 3$, $i=3$ is not a peak. Thus, $\sigma$ has peak set $Peak(\sigma)=\{2,4\}$. Given $S=\{2,4\}$, the set $P^{B}(S;5)=\{\sigma \in S^{B}_5\mid Peak(\sigma)=S \}$ consists of all permutations with peak set $\{2,4\}$.

Considering the Coxeter group of type $A_{n-1}$, the symmetric group $S_n$ consisting of unsigned permutations, the analogously defined sets $P^A(S;n)$ were first studied by Nyman in \cite{N}. Afterwards, Billey, Burzy and Sagan \cite{BBS} studied the cardinality of the set $P^A(S;n)$.  Their results were then extended to permutations
of type B by Castro-Velez et al. \cite{CVal}. Numerous applications led to the study of distances in $S_n$: in statistics, for example \cite{C}, in coding theory, \cite{BCD}, in computing \cite{K}. The interested reader is also pointed to the references therein, along with \cite{KG}, and the survey \cite{DH}.

We consider three different metrics on the Coxeter group of type $B$: the Hamming metric, the $l_{\infty}$-metric, and the word metric. We determine the maximum and minimum values that each metric can obtain in each subset $P^{B}(S;n)$ for an admissible peak set $S$. Note that the minimum attained by any metric is 0, accordingly the minimum value we consider is between distinct permutations.
Our main results are the following.

\begin{enumerate}
    \item[(1)] Theorem \ref{thm:MinMetrics-Signed-Same-Peak-Set} proves that if $S$ is an admissible peak set in $S^B_n$ with $n\ge 2$, then the minimum for each of the three metrics in Definitions \ref{inftyB}, \ref{HammingB} and \ref{WordB} is 
\begin{itemize}
        \item $\min\{d_H(\sigma,\rho) \ : \ \sigma,\rho\in P^B(S;n), \ \sigma\neq \rho\} = 1$.
        \item $\min\{d_\ell(\sigma,\rho) \ : \ \sigma,\rho\in P^B(S;n), \ \sigma\neq \rho\} = 1$.
        \item $\min\{d_W(\sigma,\rho) \ : \ \sigma,\rho\in P^B(S;n), \ \sigma\neq \rho\}= 1$.
    \end{itemize}
    \item[(2)] Theorem \ref{prop:MaxMetrics-Signed-Same-Peak-Set} proves that if $S$ is an admissible peak set in $S^B_n$ with $n\ge 2$, then the maximum for each of the three metrics in Definitions \ref{inftyB}, \ref{HammingB}, \ref{WordB} is
\begin{itemize}
        \item max($d_H(P^B(S;n)$)) = n.
        \item 
        $
        \text{max}(d_\ell(P^B(S;n)) =
            \bigg\{ \begin{array}{lr}
                 2n-1 ,&\text{if } \{2, n-1\} \subset S  \\
                 2n   ,&\text{otherwise}.
            \end{array}
        $
        \item max($d_W(P^B(S;n))$) = $n^2 - |S|.$
    \end{itemize}
\end{enumerate}

Our findings extend \cite{metrics}, where the authors determined the minimum and maximum possible values that the Hamming, the $l_{\infty}$ and the Kendall-Tau metrics can achieve on unsigned permutations with the same peak set. Notice that while we consider the word metric, we note in Remark \ref{Kendall} that in the context of unsigned permutations the word metric coincides with the Kendall-Tau metric considered in \cite{metrics}.

\section{Metrics on permutations}   

We begin this section by defining unsigned permutations, peaks, and peak sets. We also define and give examples of left and right multiplication for permutations. 

A \textbf{permutation} is a bijective function from $[n]:=\{1, 2, \ldots, n \}$ to $[n]$. In one-line notation, $\sigma = 2341$ is the permutation for which  $\sigma(1) =2,$ $\sigma(2) =3$, $\sigma(3)= 4$, and $\sigma(4) =1$.
A permutation $\sigma$ has a \textbf{peak} at index $i\in\{2,\ldots, n-1\}$ if $\sigma(i-1) < \sigma(i) > \sigma(i+1)$. Accordingly, the \textbf{peak set} of a permutation $\sigma$, denoted $Peak(\sigma)$, is the set of indices at which $\sigma$ has a peak. For example, $\sigma = 2435176$ has peak set $Peak(\sigma) = \{2,4,6 \}$. We notate the collection of permutations that possess the same peak set $S$ as $P(S;n)=\{\sigma \in S^B_n~|~ Peak(\sigma)=S\}$.

We now define two of our metrics of interest: the \textit{${l_\infty}$-\textbf{metric}}, and the \textbf{Hamming metric}. We will use unsigned permutations $\sigma = 21543$ and $\rho = 32415$ as illustrative examples. Notice that $\sigma$ and $\rho$ have the same peak set. That is, $Peak(\sigma) = Peak(\rho) =\{3\}$. 

\begin{definition}\label{linfty}
    Let $d_\ell$, denoting the \textit{${l_\infty}$-\textbf{metric}}, be the map $d_\ell: S_n\times S_n \rightarrow [0, \infty)$ such that $d_\ell(\sigma,\rho) = $max$\{|\sigma(i) - \rho(i)| \ : \ i\in [n]\} $.
\end{definition}

Considering $\sigma = 21543$ and $\rho = 32415$, $d_\ell (\sigma, \rho) = |\sigma(4) - \rho(4)| = 3$. 

\begin{definition}\label{Hamming}
    Let $d_{H}$, denoting the \textbf{Hamming metric}, be the map $d_{H}:S_n\times S_n\to [0,\infty)$ such that $d_H(\sigma,\rho)$ is the number of indices where $\sigma$ and $\rho$ differ. That is, if $\sigma = \sigma(1)\sigma(2)...\sigma(n)$ and $\rho = \rho(1)\rho(2)...\rho(n)$ then 
    $$
    d_{H}(\sigma,\rho)=|\{i\in [n] \ : \ \sigma(i)\neq\rho(i)\}|.
    $$  
\end{definition}

Since all entries of $\sigma = 21543$ and $\rho = 32415$ differ, $d_H(\sigma,\rho) = 5$.  \\

To define our third metric of interest, the \textbf{word metric}, we will first introduce a few auxiliary definitions, including that of transpositions. Transpositions provide an easy method for generating permutations.

\begin{definition}[Transpositions]\label{unsignedtransp}
    A \textbf{transposition} of $S_n$ is a permutation that swaps two elements $i$ and $j$ and fixes all other elements of $[n]$. We say that a transposition of $S_n$ is an \textbf{adjacent transposition} when the two transposed elements are $i$ and $i+1$ for some $i\in [n-1]$ and denote it with $s_i$.
\end{definition}

Recall that the symmetric group acts on itself via left-multiplication and right-multiplication. We remind the reader that multiplying a permutation $\sigma$ on the left by an adjacent permutation swapping $i$ and $i+1$ results in a \textbf{value swap} of $i $ and $i+1$ in the permutation. Multiplying a permutation $\sigma$ on the right by an adjacent permutation swapping $i$ and $i+1$ results in an \textbf{index swap} of the values at indices $i $ and $i+1$ in the permutation. As is well-known, every permutation of $[n]$ can be obtained by a sequence of adjacent index swaps or adjacent value swaps. In fact, we define length and the word metric in terms of adjacent permutations.

\begin{definition}
    Given $\pi\in S_n$, write $\pi=\tau_1\tau_2\cdots \tau_k$. If $k$ is minimal among all such expressions for $\pi$, then we say that $\pi$ has \textbf{length} $k$ and write $\ell(\pi)=k$.
\end{definition}

\begin{definition}\label{WordMetricDef}
    Let $d_W:S_n\times S_n\to [0,\infty)$ be defined by $d_W(\sigma,\pi)=\ell(\pi^{-1}\sigma)$, where $\ell$ is the length function for $S_n$ with respect to adjacent transpositions. We say that $d_W$ is the \textbf{word metric} on $S_n$ with respect to the generating set of adjacent transpositions. That is, $d_W(\sigma,\pi)$ is the minimum number of adjacent transpositions required to transform $\sigma$ into $\pi$ via right multiplication.
\end{definition}

\begin{remark}\label{Kendall}
It is well known that the length of a permutation can also be computed as the number of inversions of said permutation, see \cite{BB} Proposition 1.5.2 for example, where the number of inversions of a permutation $\pi$ is defined as $\inv(\pi)=|\{(i,j)\ :\ 1\le i<j\le n, \ \pi(i)>\pi(j)\}|$. That is, $\ell(\pi)=\inv(\pi)$. Hence, the distance between two permutations $\sigma$ and $\rho$ can be computed as the number of deranged pairs, where a deranged pair of $\sigma$ and $\rho$ is a pair $(i,j)$ with $1\le i<j\le n$ satisfying $(\sigma(i)-\rho(i))(\sigma(j)-\rho(j)) <0$. We note that the authors in \cite{metrics} refer to $d_W$ as the Kendall-Tau metric, which they denote as $d_K$.
\end{remark}

For $\sigma = 21543$ and $\rho = 32415$, we can compute $d_W(\sigma , \rho) = 4$, as $(1,5),(2,4),(3,5),(4,5)$ all satisfy $(\sigma(i)-\rho(i))(\sigma(j)-\rho(j)) <0$, and thus are all deranged pairs.

We now summarize the main result from \cite{metrics}, which determines the maximum and minimum value of each of the previous three metrics for unsigned permutations with the same peak set. This is the result that we generalize to signed permutations in Section \ref{sec:MetricsSigned}.

\begin{theorem}[Proposition 3.2, Theorems 3.4, 3.5, 3.6 of \cite{metrics}] Given $S\subset [n]$, the maximum and minimum values for the Hamming, $l_{\infty}$, and word metric on unsigned permutations with peak set $S$ are
    \begin{align*}
\min(d_H(P(S;n))) &= 2, \quad \max(d_H(P(S;n))) = n, \\
\min(d_\ell(P(S;n))) &= 1, \quad  \max(d_\ell(P(S;n))) =
            \bigg\{ \begin{array}{lr}
                 n-2 ,&\text{if } \  \{2, n-1\} \subset S  \\
                 n-1   ,&\text{otherwise} 
            \end{array} \\
\min(d_K(P(S;n))) &= 1, \quad \max(d_K(P(S;n))) = \binom{n}{2} -2|S|.
\end{align*}
\end{theorem}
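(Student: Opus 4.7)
The plan is to prove the six equalities (minimum and maximum of each of the three metrics) separately. In each case, one direction will follow from a general principle applied to $P(S;n)$, and the other will require an explicit construction of a pair $\sigma,\rho \in P(S;n)$ tailored to the admissible peak set $S$.

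For the three minima, I would first note that if $\sigma \neq \rho$ both lie in $P(S;n)$, then trivially $d_\ell(\sigma,\rho) \geq 1$ and $d_W(\sigma,\rho) \geq 1$, while $d_H(\sigma,\rho) \geq 2$ because any two distinct bijections of $[n]$ must disagree in at least two positions. To achieve all three bounds simultaneously, I would exhibit, for every admissible $S$, an adjacent value swap: starting from any $\sigma \in P(S;n)$, pick two consecutive values $k, k+1$ whose positions sit inside a common monotone block of $\sigma$ (so that exchanging them does not alter the ascent/descent pattern), and let $\rho$ be the resulting permutation. Then $d_H(\sigma,\rho) = 2$, $d_\ell(\sigma,\rho) = 1$, and $d_W(\sigma,\rho) = 1$ all at once. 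The only content is to verify that such a safe pair always exists; since the size of an admissible $S$ is at most $\lfloor (n-1)/2 \rfloor$, some monotone block of $\sigma$ has length at least $2$ and therefore contains two consecutive values of $[n]$.

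For the maxima of $d_H$ and $d_\ell$, the upper bounds $d_H \leq n$ and $d_\ell \leq n-1$ are immediate from values lying in $[n]$. To achieve $d_H = n$, I would build $\rho$ from some $\sigma \in P(S;n)$ by a block-wise cyclic shift that disagrees with $\sigma$ in every coordinate while reproducing the peak pattern. The $\ell_\infty$ case splits on whether $\{2, n-1\} \subset S$. Generically, one places $\{\sigma(i),\rho(i)\} = \{1,n\}$ at an unconstrained index $i$ and achieves $n-1$. When $\{2, n-1\} \subset S$, I would observe that the value $n$ at an interior index $i$ forces $i$ to be a peak, while $1$ at an interior index forces a valley; combined with $\sigma(1) < \sigma(2)$ and $\sigma(n-1) > \sigma(n)$ (from the peaks at $2$ and $n-1$), a short case analysis on the location of $\{1,n\}$ rules out every scenario in which $|\sigma(i) - \rho(i)| = n-1$, giving $d_\ell \leq n-2$. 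Achievability at $n-2$ follows by placing $n$ and $1$ at symmetric inner positions and filling in the rest consistently.

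For the word metric, I would use Remark \ref{Kendall} to recast $d_W(\sigma,\rho)$ as the number of deranged pairs, so the trivial upper bound is $\binom{n}{2}$. The correction $-2|S|$ is a local fact at each peak $i \in S$: the inequalities $\sigma(i-1) < \sigma(i) > \sigma(i+1)$ together with the same for $\rho$ force a short sign analysis that prohibits both $(i-1,i)$ and $(i,i+1)$ from being deranged at once, yielding exactly two forbidden pairs per peak and hence $d_W \leq \binom{n}{2} - 2|S|$. The main obstacle is the matching construction: producing $\sigma, \rho \in P(S;n)$ that realize every permissible deranged pair. I would approach this by starting from $\sigma$ and defining $\rho$ to invert the value order on all non-peak-local pairs, then surgically adjusting in a small neighborhood of each $i \in S$ to restore the peak pattern without introducing any new forbidden pairs. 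This patching step is the most delicate part and will likely require either a case analysis parameterized by $S$ or a clean reduction to an extremal pair analogous to reversing the identity.
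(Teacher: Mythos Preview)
This theorem is quoted from \cite{metrics} and is not proved in the present paper, so there is no in-paper argument to compare against; I can only assess your sketch on its own.

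The clearest gap is in your uniform construction for the three minima. The existence step ``some monotone block of $\sigma$ has length at least $2$ and therefore contains two consecutive values of $[n]$'' is false: in $\sigma=2413$ the maximal ascending runs have value sets $\{2,4\}$ and $\{1,3\}$, neither containing consecutive integers. Worse, even when $k$ and $k{+}1$ do occupy adjacent positions inside a run, swapping them flips that ascent to a descent and can alter the peak set (swapping the middle entries of $1234$ yields $1324$, creating a peak at $2$). What actually makes all three minima fall out at once is Lemma~3.1 of \cite{metrics}: swapping the values $k$ and $k{+}1$ when they occupy \emph{non-adjacent} positions preserves the peak set, and such a value swap visibly gives $d_H=2$, $d_\ell=1$, and exactly one discordant pair, hence $d_K=1$. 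The residual work is then only to exhibit, for each admissible $S$, one $\sigma\in P(S;n)$ admitting such a non-adjacent pair---for instance $\sigma=\textbf{e}[S]$ when $S\neq\emptyset$, with an ad hoc pair for $S=\emptyset$.

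Your upper bound for $d_K$ is correct: at each $i\in S$ both $(i-1,i)$ and $(i,i+1)$ are concordant, and admissibility of $S$ forces these $2|S|$ pairs to be distinct, so $d_K\le\binom{n}{2}-2|S|$. The matching lower-bound construction, however, is left at the level of ``surgically adjusting'' a reversed permutation; as you yourself flag, this is the hard step, and without an explicit pair $\sigma,\rho\in P(S;n)$ realizing $\binom{n}{2}-2|S|$ discordant pairs the maximum is not established. The arguments for the $d_H$ and $d_\ell$ maxima are along the right lines.
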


\section{Signed permutations}

In this section, we discuss some properties of the group of signed permutations $S_{n}^{B}$ and generalize the $l_{\infty}$, the Hamming and word metrics to $S^B_n$. 
The tools presented in this Section will be useful in Section \ref{sec:MetricsSigned}, where we compare the extremal metric values on unsigned permutations from \cite{metrics} with our results for signed permutations.

First, recall that $S^B_n$ is a group with multiplication defined as composition.

\begin{example}
    Given $\tau = 12\bar{4}\bar{3}$ and $\sigma = 3\bar{2}1\bar{4}$, we compute the product $\tau\sigma$. We view the numbers in $\sigma$ as indicating positions in $\tau$. For example, the first value of $\sigma$ is $3$, and at index $3$ of $\tau$ there is a $-4$. That is, $\tau\sigma(1)=-4$ so that the first entry of the one-line notation for $\tau\sigma$ is $\bar 4$. The second value of $\sigma$ is $-2$, which we can think of as indicating index $2$ of $\tau$ with a change of sign, i.e. $\tau\sigma(2)=-2$. Continuing in this fashion yields $\tau\sigma=\bar4\bar213$. We note that the product $\sigma\tau=3\bar24\bar1$ so that $S_n^B$ is not commutative.
\end{example}

Another property of the group $S^B_n$  is that every element has an \textbf{inverse}. For example, to compute the inverse of $\sigma = \bar{3}1\bar{2}4$, where $\sigma(1) =-3,~\sigma(2) =1,~\sigma(3)= -2 ,~\sigma(4) =4$, we negate all indices and their values and write this backwards: since $\sigma(-1) =3,~\sigma(-2) = -1,~ \sigma(-3)= 2 ,~\sigma(-4) = -4$, the inverse becomes $\sigma^{-1} = \bar{4}2\bar{1}3$. 

The group of unsigned permutations $S_{n}$, as well as the group of signed permutations $S_{n}^{B}$, are both Coxeter groups and come with a prescribed group of generators. In fact, the group of unsigned permutations is a subgroup of the group of signed permutations, so $S_{n}$ is embedded in $S_{n}^{B}$ for a given $n$. We will later use this fact to help us determine extremal values of our metric in Section \ref{sec:MetricsSigned}. 

The generators we will use to create signed permutations from the identity are transpositions, together with an additional generator that swaps the sign of the first entry.

To be able to define the word metric on $S^B_n$, we need to fix a set of generators of $S^B_n$. As mentioned above, we will use adjacent transpositions and negation of the first entry:

\begin{definition}\label{SignedGeneratorDef}
    In $S_n^B$, we define the \textbf{Coxeter generators}, denoted $s_i^B$ and $s_0^B$, as follows. For $1\leq i<n$, let $s_i^B$ be the permutation that swaps $i$ with $i+1$ and $-i$ with $-(i+1)$. These act just like the adjacent transpositions we are familiar with from the unsigned permutation group $S_n$, except they are now extended symmetrically to include negative values. In one-line notation, $s_i^B$ looks like the identity permutation with entries $i$ and $i+1$ transposed. For instance, $s_2^B$ is denoted $1 3 2 4 \ldots  n$. We also define the generator $s_0^B$ that swaps $1$ and $-1$. Its one-line notation is $\bar{1}23 \ldots n$. 
\end{definition}

It is straightforward to verify that $S_n^B$ can be generated by $s_0^B$, $s_1^B$, $\dots$, $s_{n-1}^B$. The \textbf{length} of a signed permutation $\sigma$, denoted $\ell_B(\sigma)$, is defined as the minimum number of generators needed in a product equal to $\sigma$, generalizing Definition \ref{WordMetricDef}. For example, in the setting of signed permutations, an expression for $\sigma = 21\bar{4}3$ using a minimal number of generators is $s_3^B s_2^B s_1^B s_0^B s_1^B s_2^B s_1^B$. Thus, $\ell_B(\sigma)=7$. It is also worth mentioning that via Proposition 8.1.1 of \cite{BB} , $\ell_B(\sigma)=7=inv_B(\sigma)$.

We restate Definitions \ref{linfty}, \ref{Hamming} and \ref{WordMetricDef} for $S_n^B$

\begin{definition}\label{inftyB}
    Let $d_\ell$, denoting the \textit{${l_\infty}$-\textbf{metric}}, be the map $d_\ell: S_n^B\times S_n^B \rightarrow [0, \infty)$ such that $d_\ell(\sigma,\rho) = $max$\{|\sigma(i) - \rho(i)| \ : \ i\in [n]\} $.
\end{definition}

\begin{definition}\label{HammingB}
    Let $d_{H}$, denoting the \textbf{Hamming metric}, be the map $d_{H}:S_n^B\times S_n^B\to [0,\infty)$ such that if $\sigma = \sigma(1)\sigma(2)...\sigma(n)$ and $\rho = \rho(1)\rho(2)...\rho(n)$ then 
    $$
    d_{H}(\sigma,\rho)=|\{i\in [n] \ : \ \sigma(i)\neq\rho(i)\}|.
    $$  
\end{definition}

\begin{definition}\label{WordB}
    Let $d_W:S_n^B\times S_n^B\to [0,\infty)$ be the \textbf{word metric} in $S_n^B$, which is defined by $d_W(\sigma,\pi)=\ell_B(\pi^{-1}\sigma)$.
\end{definition}

\begin{remark}\label{wordremark}
    The word metric is left-invariant, that is, given $\sigma, \rho, \tau \in S^B_n$, $d_W(\sigma,\rho) = d_W(\tau\sigma,\tau\rho)$. Moreover, calling $\textbf{e}=12\ldots n$, $d_W(\textbf{e},\gamma)=\ell_B(\gamma)$ for any $\gamma\in S^B_n$. Proposition 8.1.1 from \cite{BB}, for example, shows that $\ell_B(\gamma)=\inv_B(\gamma)$, where 
\[
\inv_B(\gamma)=\negative(\gamma(1),\ldots,\gamma(n))+\inv(\gamma(1),\ldots,\gamma(n))+\nsp(\gamma(1),\ldots,\gamma(n)).\] 
Here 
$\negative(\gamma(1),\ldots,\gamma(n))$ is the number of negative entries of $\gamma$ when restricted to $[n]$, 
$\inv(\gamma(1),\ldots,\gamma(n))$ is the number of inversions of $\gamma$ (that is, pairs $(i,j)$ with $1\le i<j\le n$ so that $\gamma(i)>\gamma(j)$), and $\nsp(\gamma(1),\ldots,\gamma(n))$ is the number of negative sum pairs of $\gamma$ (that is, $(i,j)$ with $1\le i<j\le n$ for which $\gamma(i)+\gamma(j)<0$).

\end{remark}

\section{Main results}\label{sec:MetricsSigned}

In this section, we prove Proposition \ref{prop:MaxMinMetrics-Signed} and Theorems \ref{thm:MinMetrics-Signed-Same-Peak-Set} and \ref{prop:MaxMetrics-Signed-Same-Peak-Set}, where we determine the minimum and maximum value attained by each of our three metrics in $S^B_n$ and in $P^B(S;n)$ for an admissible peak set $S$. We initially used SageMath to generate enough data to motivate conjectures pertaining to the the maximum and minimum attained by each metric in $S_n^B$ and $P^B(S;n)$ for arbitrary $S$. 
This data is in the tables in Figure \ref{fig:data_tables}.  We were then able to prove our conjectures. Proposition \ref{prop:MaxMinMetrics-Signed} describes the extremal values attained by our three metrics in $S_n^B$. Theorem \ref{thm:MinMetrics-Signed-Same-Peak-Set} and Theorem \ref{prop:MaxMetrics-Signed-Same-Peak-Set}, our main results, describe the minimum and maximum values of those metrics on $P^B(S;n)$ for arbitrary $S$.

\begin{figure}[h]
\begin{center}
\begin{tabular}{|p{3cm}|p{1cm}|p{1cm}|p{1cm}|p{1cm}|p{1cm}|p{1cm} |}
    \hline
    \textbf{Peak Set, n=3} & \textbf{Word min} & \textbf{Word max} & \textbf{Ham min} & \textbf{Ham max} & \textbf{l-inf min} & \textbf{l-inf max}  \\
    \hline
    $\emptyset$ & 1 & 9 & 1 & 3 & 1 & 5  \\
    \hline
    \{2\} & 1 & 8 & 1 & 3 & 1 & 5   \\
    \hline
    \textbf{Overall} & 1 & 9 & 1 & 3 & 1 & 5  \\
    \hline
    
    \hline
    \textbf{Peak Set, n=4} & \textbf{Word min} & \textbf{Word max} & \textbf{Ham min} & \textbf{Ham max} & \textbf{l-inf min} & \textbf{l-inf max}  \\
    \hline
    $\emptyset$ & 1 & 16 & 1 & 4 & 1 & 8  \\
    \hline
    \{2\} & 1 & 15 & 1 & 4 & 1 & 8  \\
    \hline
    \{3\} & 1 & 15 & 1 & 4 & 1 & 8 \\
    \hline
    \textbf{Overall} & 1 & 16 & 1 & 4 & 1 & 8 \\
    \hline
    
    \hline
    \textbf{Peak Set, n=5} & \textbf{Word min} & \textbf{Word max} & \textbf{Ham min} & \textbf{Ham max} & \textbf{l-inf min} & \textbf{l-inf max} \\
    \hline
     $\emptyset$ & 1 & 25 & 1 & 5 & 1 & 10  \\
     \hline
     \{2\} & 1 & 24 & 1 & 5 & 1 & 10 \\
     \hline
     \{3\} & 1 & 24 & 1 & 5 & 1 & 10 \\
     \hline
     \{4\} & 1 & 24 & 1 & 5 & 1 & 10  \\
     \hline
     \{2,4\} & 1 & 23 & 1 & 5 & 1 & 9  \\
     \hline
     \textbf{Overall} & 1 & 25 & 1 & 5 & 1 & 10  \\
     \hline
    
    \hline
    \textbf{Peak Set, n=6} & \textbf{Word min} & \textbf{Word max} & \textbf{Ham min} & \textbf{Ham max} & \textbf{l-inf min} & \textbf{l-inf max} \\
    \hline
    $\emptyset$ & 1 & 36 & 1 & 6 & 1 & 12 \\
    \hline
    \{2\} & 1 & 35 & 1 & 6 & 1 & 12 \\
    \hline
    \{3\} & 1 & 35 & 1 & 6 & 1 & 12 \\
    \hline
    \{4\} & 1 & 35 & 1 & 6 & 1 & 12 \\
    \hline
    \{5\} & 1 & 35 & 1 & 6 & 1 & 12 \\
    \hline
    \{2,4\} & 1 & 34 & 1 & 6 & 1 & 12 \\
    \hline
    \{2,5\} & 1 & 34 & 1 & 6 & 1 & 11 \\
    \hline
    \{3,5\} & 1 & 34 & 1 & 6 & 1 & 12 \\
    \hline
    \textbf{Overall} & 1 & 36 & 1 & 6 & 1 & 12 \\
    \hline
    \end{tabular}
\end{center}
\caption{Data tables for metrics of signed permutations with same peak set.}
\label{fig:data_tables}
\end{figure}

\begin{proposition}\label{prop:MaxMinMetrics-Signed}
    Given $n \geq 2$, we have
\begin{itemize}
        \item $\min\{d_H(\sigma,\rho) \ : \ \sigma,\rho\in S^B_n, \ \sigma\neq \rho\} = 1$, $\max(d_H(S^B_n)) = n$.
        \item $\min\{d_\ell(\sigma,\rho) \ : \ \sigma,\rho\in S^B_n, \ \sigma\neq \rho\}= 1$, $\max(d_\ell(S^B_n)) = 2n$.
        \item $\min\{d_W(\sigma,\rho) \ : \ \sigma,\rho\in S^B_n, \ \sigma\neq \rho\} = 1$, $\max(d_W(S^B_n)) = n^2$.
    \end{itemize}
    \end{proposition}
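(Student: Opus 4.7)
The plan is to handle each metric in turn, exhibiting a pair of permutations that realizes the claimed extremal value and pairing it with a short general upper bound. Throughout, I would anchor the arguments at the identity $\id=12\cdots n$ and use the Coxeter generators from Definition \ref{SignedGeneratorDef}.

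For the three minimums, a single Coxeter generator suffices, though not necessarily the same one for each metric. Pairing $\id$ with $s_0^B\id=\bar{1}23\cdots n$ changes only the first entry, giving $d_H=1$. Pairing $\id$ with $s_1^B\id=213\cdots n$ gives $\max_i|\id(i)-s_1^B\id(i)|=1$, so $d_\ell=1$. For the word metric, $d_W(\id,s_i^B)=\ell_B(s_i^B)=1$ for any single generator. Since each of $d_H$, $d_\ell$, $d_W$ takes values in the nonnegative integers and vanishes only on coincident permutations, distinct permutations are at distance $\ge 1$, and each minimum equals $1$.

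For the maximums I proceed metric by metric. The bound $d_H(\sigma,\rho)\le n$ is immediate, and the pair $(\id,\bar 1\bar 2\cdots\bar n)$ attains equality. For $d_\ell$, the triangle inequality together with $|\sigma(i)|\le n$ for every $\sigma\in S_n^B$ yields $|\sigma(i)-\rho(i)|\le 2n$; equality at position $n$ is witnessed by the pair $(\id,12\cdots(n-1)\bar n)$, giving $d_\ell=2n$. For $d_W$, left-invariance (Remark \ref{wordremark}) gives $d_W(\sigma,\rho)=\ell_B(\rho^{-1}\sigma)$, so the diameter in $d_W$ equals $\max_{\gamma\in S_n^B}\ell_B(\gamma)$. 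Using the formula $\ell_B(\gamma)=\negative(\gamma)+\inv(\gamma)+\nsp(\gamma)$ from Remark \ref{wordremark} together with the component-wise bounds $\negative(\gamma)\le n$ and $\inv(\gamma),\nsp(\gamma)\le\binom{n}{2}$, we get $\ell_B(\gamma)\le n+2\binom{n}{2}=n^2$. I would then verify that the long element $w_0=\bar 1\bar 2\cdots\bar n$ saturates all three bounds simultaneously: all $n$ entries are negative; for every $i<j$ we have $-i>-j$, so all $\binom{n}{2}$ pairs are inversions; and $-i-j<0$ for every such pair, so all $\binom{n}{2}$ pairs are negative sum pairs. Hence $d_W(\id,w_0)=n^2$.

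The only nontrivial piece is the word-metric maximum, which requires both the length formula for $S_n^B$ and the observation that a single element $w_0$ saturates the three summands $\negative$, $\inv$, and $\nsp$ at once. The Hamming and $l_\infty$ computations reduce to one-line verifications.
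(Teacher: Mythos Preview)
Your proof is correct and follows essentially the same direct-verification approach as the paper: bound each metric from the definition (using the length formula $\ell_B=\negative+\inv+\nsp$ for $d_W$) and exhibit a witnessing pair. The only differences are cosmetic choices of witnesses---you use $(\id,\bar\id)$ and $(\id,12\cdots(n-1)\bar n)$ for the Hamming and $l_\infty$ maxima where the paper uses a cyclic shift and the pair $\bigl(n(n-1)\cdots1,\ \bar n(n-1)\cdots1\bigr)$, and you verify $\ell_B(\bar\id)=n^2$ by saturating the three summands rather than computing $\sum_{k=1}^n(2k-1)$.
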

    
\begin{proof} By definition, given $\tau,\pi\in S^B_n$, $d_H(\tau,\pi)\le n$. To show that $n$ is attained, let $\textbf{e} = 123\ldots n$ and $\rho = 234 \ldots n1$. Then $d_H(\textbf{e},\rho)=n$, as desired. To prove $\min(d_H(S^B_n)) = 1$, notice first that if $\tau,\pi\in S^B_n$ and $\tau\neq\pi$, then $d_H(\tau,\pi)\ge 1.$ Now let $\rho = \bar{1}23 \hdots n$. Then $d_H(\textbf{e},\rho)=1$, as desired.

    For the $l_\infty$-metric, by definition one sees that if $\tau,\pi\in S^B_n$ and $\tau\neq\pi$, then $1\le d_\ell(\tau,\pi)\le 2n.$ Letting  $\rho = 213 \ldots n$, we see that $d_\ell(\textbf{e},\rho)=1$. Defining $\tau=n 
    \; (n-1) \ldots 1$ and $\pi = \bar{n} 
    \; (n-1) \ldots 1$, for example, we obtain $d_\ell(\tau,\pi)=2n$.
    
Finally, considering the word metric, we notice that if $\tau,\pi\in S^B_n$ and $\tau\neq\pi$, then $1\le d_W(\tau,\pi)$. Letting $\rho = \bar{1}23 \ldots n$, we obtain $d_W(\textbf{e},\rho)=1$. Now, since $d_W(\sigma\tau)=\ell_B(\pi^{-1}\sigma)=\text{inv}_B(\pi^{-1}\sigma)$ (see Remark \ref{wordremark}), we obtain $d_W(S^B_n)\le n+2|(i,j) \ : \ 1\le i<j\le n\}|=n+2\frac{n(n-1)}{2}=n^2.$ To show that the maximum is attained, let  $\bar{\textbf{e}}=\bar{1}\bar{2}\ldots\bar{n}$. We see that $d_W(\textbf{e},\bar{\textbf{e}})=\ell_B(\textbf{e}^{-1}\bar{\textbf{e}})=\ell_B(\bar{\textbf{e}})=\sum_{k=1}^n(2k-1)=n^2$, as desired.

\end{proof}
\begin{corollary} For signed permutations with empty peak set, $\max(d_W(P^B(\emptyset;n)))=n^2$.
\end{corollary}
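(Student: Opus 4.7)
The plan is to leverage the maximum already exhibited in Proposition \ref{prop:MaxMinMetrics-Signed} and simply verify that the extremal pair used there lies inside $P^B(\emptyset;n)$. Since $P^B(\emptyset;n)\subseteq S^B_n$, Proposition \ref{prop:MaxMinMetrics-Signed} immediately gives the upper bound
\[
\max(d_W(P^B(\emptyset;n))) \le \max(d_W(S^B_n)) = n^2,
\]
so the whole task reduces to producing a pair in $P^B(\emptyset;n)$ achieving this value.

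I would use the same witnesses as in the proof of Proposition \ref{prop:MaxMinMetrics-Signed}, namely $\textbf{e}=12\cdots n$ and $\bar{\textbf{e}}=\bar{1}\bar{2}\cdots\bar{n}$. The first is strictly increasing, so for every $i\in\{2,\ldots,n-1\}$ we have $\textbf{e}(i-1)<\textbf{e}(i)<\textbf{e}(i+1)$, precluding any peak; hence $Peak(\textbf{e})=\emptyset$. The second is strictly decreasing, since $-1>-2>\cdots>-n$, so $\bar{\textbf{e}}(i-1)>\bar{\textbf{e}}(i)>\bar{\textbf{e}}(i+1)$ for every admissible $i$, and again $Peak(\bar{\textbf{e}})=\emptyset$. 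Both permutations therefore lie in $P^B(\emptyset;n)$, and the computation $d_W(\textbf{e},\bar{\textbf{e}})=\ell_B(\bar{\textbf{e}})=\sum_{k=1}^n(2k-1)=n^2$ from Proposition \ref{prop:MaxMinMetrics-Signed} now realizes the upper bound.

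There is no real obstacle here: the argument is essentially a one-line observation that the pair attaining the global maximum on $S^B_n$ happens to be a pair of monotone sequences, which are peak-free by inspection. The only point worth flagging is that this corollary is the easy case; the genuinely substantive content, handled in Theorem \ref{prop:MaxMetrics-Signed-Same-Peak-Set}, is showing that the maximum remains $n^2-|S|$ for every admissible nonempty $S$, where monotone witnesses are no longer available and one must construct explicit extremizers within each stratum $P^B(S;n)$.
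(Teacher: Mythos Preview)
Your argument is correct and is essentially identical to the paper's own proof: both invoke Proposition~\ref{prop:MaxMinMetrics-Signed} for the global upper bound $n^2$ and then observe that the very witnesses $\textbf{e}$ and $\bar{\textbf{e}}$ used there already lie in $P^B(\emptyset;n)$. The only difference is that you spell out the monotonicity check that $Peak(\textbf{e})=Peak(\bar{\textbf{e}})=\emptyset$, which the paper leaves implicit.
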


\begin{proof}
    This follows immediately from Theorem \ref{prop:MaxMinMetrics-Signed}, as $d_W(\textbf{e},\bar{\textbf{e}})=n^2$ and both $\textbf{e},\bar{\textbf{e}}\in P^B(\emptyset;n)$.
\end{proof}

The following lemma and definition will be used to prove our remaining main results. 

\begin{lemma}
    (As Lemma 3.1 in \cite{metrics}) Let $S$ be an admissible peak set and $\sigma \in P^B(S;n)$. For any $i \in \{2,3, \hdots ,n-2,n-1\}$, if $i$ and $i+1$ do not appear consecutively in $\sigma$, swapping $i$ and $i+1$ creates a permutation $\sigma'$ with the same peak set as $\sigma$, i.e. $\sigma' \in P^B(S;n)$. Also, swapping $\overline{n}$ and $\overline{n-1}$ produces a permutation with the same peak set as $\sigma$.  
\end{lemma}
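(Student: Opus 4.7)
The plan is a purely local analysis: the peak status at any index $j\in\{2,\dots,n-1\}$ depends only on the triple $(\sigma(j-1),\sigma(j),\sigma(j+1))$, so I only need to verify the peak condition at those indices whose window intersects the positions whose values are changed by the swap. The two claims are handled separately.

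For the first claim, let $p,q\in[n]$ be the positions with $\sigma(p)=i$ and $\sigma(q)=i+1$ (if either value is absent the swap is trivial). Only the entries at $p$ and $q$ change, so the only potentially affected indices are $\{p-1,p,p+1,q-1,q,q+1\}\cap\{2,\dots,n-1\}$. The central observation is: since each absolute value in $\{1,\dots,n\}$ appears exactly once in $\sigma$, every entry $v$ of $\sigma$ other than $i$ and $i+1$ satisfies $|v|\notin\{i,i+1\}$, hence $v\le i-1$ or $v\ge i+2$; therefore $v<i\Leftrightarrow v<i+1$ and $v>i\Leftrightarrow v>i+1$. The non-adjacency hypothesis $|p-q|\ge 2$ is used precisely to ensure that for $j\in\{p,q\}$ both of $\sigma(j\pm1)$ are such ``other'' entries, so exchanging $\sigma(j)$ between $i$ and $i+1$ preserves the two strict inequalities defining a peak at $j$. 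For $j\in\{p\pm1,q\pm1\}\cap\{2,\dots,n-1\}$ with $j\notin\{p,q\}$, the entry $\sigma(j)$ is itself an ``other'' entry and at most one of its neighbors changes, swapping only between $i$ and $i+1$, so the observation again gives invariance. The corner case $|p-q|=2$, where $p+1=q-1$ is a shared affected index, is handled identically since both changing neighbors still only swap between $i$ and $i+1$.

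For the second claim let $p,q$ be the positions of $-n$ and $-(n-1)$ in $\sigma$; now no non-adjacency hypothesis is imposed. The key observation is that $-n$ and $-(n-1)$ are the two smallest elements of $\{-n,\dots,-1,1,\dots,n\}$, while every other entry $v$ of $\sigma$ has $|v|\le n-2$, hence $v\ge-(n-2)>-(n-1)>-n$. Two consequences cover every affected index. First, neither $p$ nor $q$ is a peak in $\sigma$ or in $\sigma'$: the entry at $p$ or $q$ always lies in $\{-n,-(n-1)\}$, which is strictly less than every ``other'' neighbor, and in the adjacent configuration $|p-q|=1$ the other side's neighbor is the remaining extreme negative, which still breaks one of the two strict inequalities by the linear order $-n<-(n-1)$, both before and after the swap. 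Second, at an affected index $j\notin\{p,q\}$, the side of the peak condition at $j$ involving the swapped neighbor has the form $\sigma(j)>\sigma(p)$ or $\sigma(j)>\sigma(q)$; this is automatic from extremality and hence invariant under exchanging $-n$ and $-(n-1)$, while the opposite side of the window is unchanged.

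The main obstacle is purely bookkeeping: enumerating the small number of configurations (namely $p$ or $q$ near the boundary, and $|p-q|\in\{1,2\}$ for the second claim) and in each one reading off the appropriate one-line inequality check built from the two central observations. No delicate computation is required; the substantive content of the lemma is exactly the two extremality observations above.
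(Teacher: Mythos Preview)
The paper does not supply its own proof of this lemma; it is stated with a citation to Lemma~3.1 of \cite{metrics} and left without argument. Your local analysis is correct and is exactly the natural proof: for the first claim, no other entry of $\sigma$ can equal $i$ or $i+1$, so every comparison between a swapped value and a neighboring ``other'' value is insensitive to replacing $i$ by $i+1$ or vice versa; the non-adjacency hypothesis ensures that at the swapped positions themselves both neighbors are ``other'' values. For the second claim, $-n$ and $-(n-1)$ are the two smallest possible entries, so neither position can be a peak before or after the swap, and at any adjacent index the inequality on the side of the swapped neighbor is automatic by extremality. Your case split $|p-q|\in\{1,2\}$ covers the overlapping windows correctly.

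One small remark on phrasing: in the first claim you write ``if either value is absent the swap is trivial.'' In a signed permutation it can happen that exactly one of $i,\,i+1$ appears positively while the other appears with a bar, and then a literal value swap of $i$ and $i+1$ alone is not even well-defined as a signed permutation. This is not a gap in your mathematics, since the lemma's hypothesis that ``$i$ and $i+1$ do not appear consecutively in $\sigma$'' tacitly presupposes that both values are present; but it would be cleaner to state explicitly at the outset that the claim concerns the case where both $i$ and $i+1$ (respectively both $\overline{n}$ and $\overline{n-1}$) occur in the one-line notation of $\sigma$.
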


\begin{definition}\label{es}
    As before, let $\id$ be the identity permutation $1 2 \ldots n$ and $\bar \id=\bar1\bar2\cdots\bar n$ the identity permutation with all values barred. We use a superscript $*$ to indicate writing a permutation in reverse. That is, $\id^*=n(n-1)\cdots1$ and $\bar \id^*=\bar n(\overline{n-1})\cdots\bar1$

    For an admissible peak set $S$, define $\textbf{e}[S]$ and $\bar \id^*[S]$ as the permutations obtained by swapping the entries $k$  and $k+1$ in \textbf{e} and $\bar \id^*$ for each $k \in S$. Similarly, let $\id^*[S]$ and $\bar \id[S]$ be the permutations obtained by swapping the entries $k-1$ and $k$ in \textbf{e}* and $\bar \id$, for each $k \in S$. Since any admissible set $S$ has no consecutive entries, these permutations are well-defined because the order of the swaps does not matter. More explicitly, we have that for $i \in \{1,2, \ldots , n\}$,

\arraycolsep=.5cm
\[\begin{array}{ll}
        
    \,\ \textbf{e} [S](i) = 
    \begin{cases}
        i+1 & \text{if } i \in S\\
        i-1 & \text{if } i-1\in S\\
        i   & \text{otherwise}
    \end{cases}
&
    
    \bar \id^*[S](i) = 
    \begin{cases}
        (-n-1+i)+1 & \text{if } i \in S\\
        (-n-1+i)-1 & \text{if } i-1\in S\\
        -n-1+i   & \text{otherwise}
    \end{cases}
    \vspace{12pt}
    \\
    
    \id^* [S](i) = 
    \begin{cases}
        (n+1-i)+1 & \text{if } i \in S\\
        (n+1-i)-1 & \text{if } i-1\in S\\
        n+1-i   & \text{otherwise}
    \end{cases}

&   \,\ \bar \id [S](i) = 
    \begin{cases}
        -i+1 & \text{if } i \in S\\
        -i-1 & \text{if } i-1\in S\\
        -i   & \text{otherwise}
    \end{cases}
\end{array}\]
    \end{definition}
    
\begin{example}
    Let $n=6$ and $S=\{2,5\}$. We have
    $\textbf{e}[S]=132465$.

    If $n=9$ and $S = \{ 2, 5, 7\}$, we have $\textbf{e}[S] = 132465879, \bar \id^*[S] = \bar{9} \bar{7} \bar{8} \bar{6} \bar{4}  \bar{5} \bar{3} \bar{2} \bar{1}$, $\id^*[S] = 897563421$, and $\bar \id[S] = \bar{2} \bar{1} \bar{3}\bar{5} \bar{4}\bar{7} \bar{6} \bar{8} \bar{9}$.    
    \end{example}

We will now prove our result concerning the minimum of the three metrics in $P^B(S;n)$.

\begin{theorem}\label{thm:MinMetrics-Signed-Same-Peak-Set}
    Let $S$ be an admissible peak set in $S^B_n$ with $n\ge 2$. Then 
\begin{itemize}
        \item $\min\{d_H(\sigma,\rho) \ : \ \sigma,\rho\in P^B(S;n), \ \sigma\neq \rho\} =1$.
        \item $\min\{d_\ell(\sigma,\rho) \ : \ \sigma,\rho\in P^B(S;n), \ \sigma\neq \rho\} = 1$.
        \item $\min\{d_W(\sigma,\rho) \ : \ \sigma,\rho\in P^B(S;n), \ \sigma\neq \rho\} = 1$.
    \end{itemize}
\end{theorem}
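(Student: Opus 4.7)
The lower bound $\ge 1$ for each of $d_H$, $d_\ell$, $d_W$ is immediate, since each is integer-valued and vanishes precisely when $\sigma=\rho$. For $d_H$ and $d_W$ I would use the same witness: set $\sigma=\id[S]$ and $\rho=\sigma s_0^B$. Because $0,1\notin S$, Definition \ref{es} gives $\id[S](1)=1$, so right-multiplication by $s_0^B$ only negates the first entry. This yields $d_H(\sigma,\rho)=1$ and $d_W(\sigma,\rho)=\ell_B(s_0^B)=1$. To confirm $\rho\in P^B(S;n)$ I would check only the peak status at position $2$ (the only position affected): since $\id[S](2)\in\{2,3\}$, both $1<\id[S](2)$ and $-1<\id[S](2)$ hold, so the ascent across positions $1,2$ is unchanged while the comparison at positions $2,3$ is identical in $\sigma$ and $\rho$, hence the peak condition at $2$ is preserved.

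For $d_\ell$ I would case-split on $S$. When $S=\emptyset$, the pair $\id,\ \id s_1^B = 21\,3\,4\cdots n$ lies in $P^B(\emptyset;n)$ with $d_\ell=1$. When $S\ne\emptyset$ and $s:=\min S\le n-2$, set $v=s+1$ and $\sigma=\id[S]$; inspection of Definition \ref{es} shows that $v$ occupies position $s$ while $v+1$ occupies position $s+2$ or $s+3$, so values $v$ and $v+1$ are non-consecutive in $\sigma$. Since $v\in\{3,\ldots,n-1\}$, the Lemma gives a $\rho\in P^B(S;n)$ obtained by swapping $v$ and $v+1$, with $d_\ell(\sigma,\rho)=1$. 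The remaining case is $S=\{n-1\}$ (which includes the small case $n=3$, $S=\{2\}$): here I would take $\sigma=\bar\id[S]$, in which Definition \ref{es} places $\overline{n-1}$ at position $n-2$ and $\bar n$ at position $n$, non-consecutive; the second clause of the Lemma then allows swapping $\bar n$ and $\overline{n-1}$ to yield the required $\rho$ with $d_\ell(\sigma,\rho)=1$.

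The main point requiring care is the case split for $d_\ell$: for each admissible $S$ one must exhibit a pair of consecutively-valued entries at non-adjacent positions in a chosen representative of $P^B(S;n)$, so that the Lemma takes care of the peak-set preservation. The only configuration for which $\id[S]$ does not furnish such a pair within the range permitted by the first clause of the Lemma is $S=\{n-1\}$, and there $\bar\id[S]$ together with the second clause of the Lemma provides the needed room. Once the witness is in hand, the Lemma does the structural work, so no further computation beyond locating the values is required.
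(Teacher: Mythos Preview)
Your argument is correct. For $d_H$ and $d_W$ you do exactly what the paper does: take $\textbf{e}[S]$ and negate its first entry; your justification that the peak status at position $2$ is preserved is in fact more explicit than the paper's.

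For $d_\ell$ your route differs from the paper's. The paper handles all admissible $S$ uniformly by swapping the values $1$ and $2$ in $\textbf{e}[S]$: since $\textbf{e}[S](1)=1$, either $\textbf{e}[S](2)=2$ (so $2\notin S$, and one checks directly that the swap preserves the peak set) or $\textbf{e}[S](2)=3$ (so $1$ and $2$ sit at positions $1$ and $3$, and a swap-of-nonadjacent-consecutive-values argument applies). You instead split into three cases ($S=\emptyset$; $\min S\le n-2$; $S=\{n-1\}$), in the middle case swapping the values $s{+}1$ and $s{+}2$ in $\textbf{e}[S]$, and in the last case passing to $\bar\id[S]$ and invoking the second clause of the Lemma about $\bar n$ and $\overline{n-1}$. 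The advantage of your approach is that every invocation of the Lemma stays strictly within its stated range $i\in\{2,\dots,n-1\}$ (the paper's uniform swap uses $i=1$, which is not literally covered by the Lemma as restated there, though it is easy to verify by hand). The trade-off is that your argument is longer and requires tracking the position of $v{+}1$ in $\textbf{e}[S]$ and producing a separate witness when $S=\{n-1\}$. Both approaches are valid; the paper's is more economical, yours more scrupulous about matching the Lemma's hypotheses.
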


\begin{proof}
   Let $S$ be an admissible set. We start with the Hamming and word metrics. By definition, $\textbf{e}[S] \in P^B(S;n)$. Define $\tau$ to be $\textbf{e}[S]$, with its first entry negated. Then $\tau\in P^B(S;n)$ and $d_H(\textbf{e}[S],\tau)=1=d_W(\textbf{e}[S],\tau)$.

Focusing now on the $l_{\infty}$ distance, notice that $\textbf{e}[S](1)=1$. We have two possibilities: either $\textbf{e}[S](2)=2$, or $\textbf{e}[S](2)\neq 2$. If $\textbf{e}[S](2)=2$, we conclude $2\notin S$, and we can swap the values $1$ and $2$ in $\textbf{e}[S]$ to create a new permutation $\rho\in P^B(S;n)$. If $\textbf{e}[S](2)\neq 2$, Lemma 3.1 from \cite{metrics} implies that we can swap the values $1$ and $2$ in $\textbf{e}[S]$ to create a new permutation $\rho\in P^B(S;n)$. In either case, we have $\textbf{e}[S], \rho\in P^B(S;n)$ and $d_{\ell}(\textbf{e}[S],\rho)=1$.
\end{proof}

\begin{example} To illustrate the proof technique of Theorem 4.6, consider the following example. Let $S = \{2,4\}$ with $n=5$ and suppose $\tau$ and $\rho$ are defined as in the proof of Theorem \ref{thm:MinMetrics-Signed-Same-Peak-Set}. Notice that $\textbf{e}[S] = 13254\in P^B(S;5)$, $\tau=\bar{1}3254\in P^B(S;5)$ and $d_H(\textbf{e}[S],\tau)=1=d_W(\textbf{e}[S],\tau)$. Also, $\rho=23154\in P^B(S;5)$ and $d_{\ell}(\textbf{e}[S],\rho)=1$.  
\end{example}

Having proven our main result with respect to the minimum of our metrics in $P^B(S;n)$, we now focus on the maximum. 

\begin{theorem}\label{prop:MaxMetrics-Signed-Same-Peak-Set}
      Let $S$ be an admissible peak set in $S^B_n$ with $n\ge 2$. Then 
\begin{enumerate}
        \item[(a)] $\max(d_H(P^B(S;n))) = n$.
        \item[(b)] 
        $
        \max(d_\ell(P^B(S;n)) =
            \bigg\{ \begin{array}{lr}
                 2n-1 ,&\text{if } \ \{2, n-1\} \subset S  \\
                 2n   ,&\text{otherwise}.
            \end{array}
        $
        \item[(c)] $\max(d_W(P^B(S;n))) = n^2 - |S|$.
    \end{enumerate}
\end{theorem}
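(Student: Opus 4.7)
The plan is to handle (a), (b), and (c) in order, relying throughout on the explicit permutations $\textbf{e}[S]$, $\bar{\id}[S]$, $\id^*[S]$, $\bar{\id}^*[S]$ of Definition \ref{es} (all in $P^B(S;n)$) and on the Coxeter-theoretic fact that the longest element $w_0 = \bar{\id}$ of $S_n^B$ is central with length $n^2$, so $\ell_B(w_0\gamma) = n^2 - \ell_B(\gamma)$ for every $\gamma$.

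For (a) the upper bound $d_H \le n$ is immediate and the pair $(\textbf{e}[S],\bar{\id}[S])$ attains equality, since the former is entirely positive and the latter entirely negative. For (b) the upper bound $d_\ell \le 2n$ is immediate. When $\{2,n-1\} \subset S$ I sharpen it to $2n-1$ by ruling out $\{\sigma(i),\rho(i)\} = \{n,-n\}$ for any $i$: the value $n$ can appear in $\sigma \in P^B(S;n)$ only at position $1$, at position $n$, or at a peak of $\sigma$, and the first two are forbidden by $2 \in S$ and $n-1 \in S$; then $\rho(i) = -n$ at a peak position $i$ of $\rho$ is impossible because $-n$ would force $i$ to be a valley. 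For the lower bound, evaluating $|\textbf{e}[S](n) - \bar{\id}[S](n)|$ gives $2n-1$ when $n-1 \in S$ and $2n$ when $n-1 \notin S$; the remaining case $2 \notin S$ with $n-1 \in S$ is handled by switching to $(\id^*[S],\bar{\id}^*[S])$, which places $n$ and $-n$ at position $1$.

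For the lower bound of (c) take $\sigma = \textbf{e}[S]$ and $\rho = \textbf{e}[S]\cdot\bar{\id}[S]$. One checks that $\bar{\id}[S] = \bar{\id}\cdot\prod_{k\in S} s_{k-1}^B$ is an involution (the $s_{k-1}^B$ commute by admissibility and $\bar{\id}$ is a central involution), so $\rho^{-1}\sigma = \bar{\id}[S]$ and $d_W(\sigma,\rho) = \ell_B(\bar{\id}[S])$. Using the decomposition $\ell_B = \negative + \inv + \nsp$ from Remark \ref{wordremark}: $\negative(\bar{\id}[S]) = n$ and $\nsp(\bar{\id}[S]) = \binom{n}{2}$ are maximal because every entry is negative, while $\inv(\bar{\id}[S]) = \binom{n}{2} - |S|$ since $\bar{\id}$ is strictly decreasing and each of the $|S|$ disjoint adjacent swaps reverses exactly one inversion; the total is $n^2 - |S|$. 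A routine case check using admissibility verifies $\rho \in P^B(S;n)$ by confirming the peak inequality at each $k \in S$ and the absence of any additional peak.

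The upper bound in (c) is the main obstacle. My strategy uses the duality $d_W(\sigma,\rho) + d_W(\sigma,-\rho) = n^2$, which follows from centrality of $w_0$ together with the identity $\sigma w_0 = -\sigma$; this reduces the desired inequality to the lower bound $d_W(\sigma,-\rho) \ge |S|$. Since negation swaps peaks and valleys, $-\rho$ has valleys exactly at $S$, so for each $k \in S$ the generator $s_k^B$ is a right descent of $\sigma$ but not of $-\rho$, while $s_{k-1}^B$ is a right descent of $-\rho$ but not of $\sigma$; by admissibility these $2|S|$ descent disagreements occur at distinct positions. The hard step is upgrading this descent-set discrepancy to a length lower bound: a naive bookkeeping argument only yields $\ell_B \ge 2|S|/3$, since a single right-multiplication by a generator can flip up to three descents. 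To close the gap I would construct, for each $k \in S$, an explicit positive root of $B_n$ (arising from the peak/valley mismatch at position $k$) that $(-\rho)^{-1}\sigma$ sends to a negative root, and then use admissibility of $S$ to ensure that the resulting $|S|$ inversion witnesses are pairwise distinct. This witness construction is the delicate technical hurdle.
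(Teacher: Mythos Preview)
Your treatments of (a), (b), and the achievability half of (c) are correct and run closely parallel to the paper's, with slightly different but equally valid witness pairs. For (a) your pair $(\textbf{e}[S],\bar{\id}[S])$ is in fact more direct than the paper's argument, which imports the unsigned result from \cite{metrics}. For (b) the paper uses the same peak/valley obstruction in the case $\{2,n-1\}\subset S$ and the same families $\id^*[S],\bar{\id}^*[S]$ and $\textbf{e}[S],\bar{\id}[S]$ for achievability, just distributed slightly differently among the subcases. For the lower bound in (c), both you and the paper end up computing $\ell_B(\bar{\id}[S])=n^2-|S|$: the paper reaches this via an explicit $(\sigma,\pi)$ with $\pi^{-1}\sigma=\bar{\id}[S]$ (Lemma~\ref{L:attain}), while you take $\sigma=\textbf{e}[S]$ and $\rho=\textbf{e}[S]\cdot\bar{\id}[S]$. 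Your claimed case check that $\rho\in P^B(S;n)$ does go through, though it is a bit more work than ``routine'' suggests.

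For the upper bound in (c) your strategy coincides exactly with the paper's: establish the duality $d_W(\sigma,\rho)+d_W(\sigma,-\rho)=n^2$ (your appeal to centrality of $w_0$ and $\ell_B(w_0\gamma)=n^2-\ell_B(\gamma)$ is equivalent to the paper's Lemma~\ref{lem:distanceofopposites}, proved there from $\ell_B=\negative+\inv+\nsp$), and then reduce to $d_W(\sigma,-\rho)\ge|S|$. The paper dispatches this final inequality in one sentence: since $\sigma$ has a local maximum and $-\rho$ a local minimum at each $k\in S$, ``there is no way to do less than $|S|$ transpositions/sign changes to go from one to the other.'' You are right to be uneasy here. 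Your observation that a single generator can flip up to three adjacent ascent/descent relations is correct---for instance, swapping positions $i,i+1$ in a pattern $\ldots 1\,3\,2\,5\,4\ldots$ destroys the peaks at $i-1$ and $i+1$ simultaneously---so the most naive potential-function argument yields only $d_W\ge 2|S|/3$, not $|S|$. Thus the step the paper treats as evident is precisely where real work remains; your root-inversion witness scheme is a reasonable plan of attack, but exhibiting $|S|$ pairwise distinct inverted positive roots from the peak/valley data is the missing ingredient in your write-up and, on close reading, is not supplied by the paper either.
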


We will prove (a), (b) and (c) separately.

\begin{proof}[Proof of Theorem \ref{prop:MaxMetrics-Signed-Same-Peak-Set}(a)] By Proposition \ref{prop:MaxMinMetrics-Signed}, we know $\max(d_H(P^B(S;n))) \le n$. By Theorem 3.6 in \cite{metrics}, we know $\max(d_H(P(S;n))=n$, from what it follows immediately that $\max(d_H(P^B(S;n))=n$.
 \end{proof}

\begin{example}
     To illustrate the proof technique of Theorem \ref{prop:MaxMetrics-Signed-Same-Peak-Set}(a), we use the construction in the proof of Theorem 3.6 in \cite{metrics} in the following example. Let $S=\{2,4\}$, $n=6$, $\sigma = 132546$, $\rho = 461325$. We have $\sigma, \rho\in P^B(S;6)$ and $d_H(\sigma,\rho)=6$.
\end{example}

\begin{proof}[Proof of Theorem \ref{prop:MaxMetrics-Signed-Same-Peak-Set}(b)]
First, consider the case where $\{2, n-1\} \not \subseteq S$. If $2 \not \in S$, then since $\id^*[S](1) = n$ and $\bar\id^*[S](1) = -n$, we have $d_{\ell}(\id^*[S],\bar \id^*[S])=2n$. Similarly, if $n-1 \not \in S$ then $\textbf{e}[S](n) = n$ and $\bar \id[S](n) = -n$, hence $d_{\ell}(\textbf{e}[S],\bar\id[S])=2n$.

   Assume now $\{2, n-1\} \subseteq S$ and let $\sigma, \rho \in P^B(S;n)$ be distinct. We first claim that $d_\ell(\sigma, \rho) \leq 2n-1$. In fact, given any permutation $\tau\in P^B(S; n)$, $n$ must not appear at index $1$ or at index $n$ since the indices $2$ and $n-1$ are peaks. Notice that if there is $1\le j\le n$ such that $\tau(j)=n$, $j$ must be a peak. Moreover, if instead there is $1\le k\le n$ such that $\sigma(k)=-n$, $k$ cannot be a peak. Hence,  $d_\ell(\sigma, \rho) \neq 2n$, as the only way to obtain $2n$ would be to have $\bar{n}$ and $n$ appear in the same index in $\sigma$ and $\rho$, respectively. Thus, $d_\ell(\sigma, \rho) \leq 2n-1$. To see that this bound is achieved, consider the permutations $\id^*[S]$ and $\bar \id^*[S]$. Since $\id^*[S](1) = n-1$ and $\bar \id^*[S](1) = -n$, then $d_\ell(\textbf{e}[S],\bar \id[S]) = 2n-1$.
\end{proof}

Now are left with proving Theorem \ref{prop:MaxMetrics-Signed-Same-Peak-Set}(c). This will be done by providing an example of a pair of permutations in $P^B(S;n)$ whose word distance is $n^2-|S|$ (Lemma \ref{L:attain}), and by showing that $n^2-|S|$ is an upper bound to the word metric between any two permutations in $P^B(S;n)$ (see Theorem \ref{T:maxword}).

\begin{lemma}\label{L:attain} Let $n\ge 2$ and $S\subset[n]$ with $2|S|+1\le n$. 
Let 
\[
\sigma(j)=\begin{cases} n-j+1, \text{ if } j\notin S \text{ and } \ j+1\notin S\\
n-j+2, \text{ if } j\in S\\
-(n-j), \text{ if } j+1\in S,
\end{cases}
\]

\[
\pi(j)=\begin{cases}
    -(n-j+1), \text{ if } j\notin S\\
    n-j+1, \text{ if } j\in S.    
\end{cases}
\]
Then $\pi, \sigma \in P^B(S;n)$ and $d_W(\sigma,\pi)=n^2-|S|.$
\end{lemma}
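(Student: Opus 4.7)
My strategy is to invoke Remark \ref{wordremark}, which identifies $d_W(\sigma,\pi)=\ell_B(\pi^{-1}\sigma)$ with the sum $\negative(\gamma)+\inv(\gamma)+\nsp(\gamma)$, where $\gamma:=\pi^{-1}\sigma$. Before that I would verify $\sigma,\pi\in P^B(S;n)$: the three cases in the definition of $\sigma$ are disjoint because $S$ contains no two consecutive indices, and a short tally shows $\{|\sigma(j)|:j\in[n]\}=[n]$. For the peak set of $\sigma$, at each $s\in S$ we have $\sigma(s-1)=-(n-s+1)<0<n-s+2=\sigma(s)$ and $\sigma(s+1)\le n-s<\sigma(s)$ (with $\sigma(s+1)<0$ if $s+2\in S$), so $s$ is a peak; at any other $i\in\{2,\dots,n-1\}$ either $\sigma(i)<0<\sigma(i+1)$ (when $i+1\in S$) or $\sigma(i-1)>\sigma(i)$ (when $i$ is ``lone''), ruling out a peak. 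An analogous case analysis gives $Peak(\pi)=S$.

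Next I would compute $\gamma$ explicitly. From the definition of $\pi$ together with $|\pi(j)|=n-j+1$,
\[
\pi^{-1}(k)=\begin{cases} n-k+1, & n-k+1\in S,\\ -(n-k+1), & n-k+1\notin S.\end{cases}
\]
Substituting into $\sigma$ and using the admissibility fact $s\in S\Rightarrow s-1\notin S$, a case split on whether $j$ is lone, in $S$, or has $j+1\in S$ collapses to the clean formula
\[
\gamma(j)=\begin{cases} -j, & j\notin S,\ j+1\notin S,\\ -(j-1), & j\in S,\\ -(j+1), & j+1\in S.\end{cases}
\]
In other words, $\gamma$ is $\bar\id=\bar 1\bar 2\cdots\bar n$ modified by swapping the values at adjacent positions $s-1$ and $s$ for each $s\in S$, and these swaps act on pairwise disjoint position pairs because $S$ has no consecutive elements.

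From here the length count is nearly mechanical. Since every entry of $\gamma$ is negative, $\negative(\gamma)=n$ and every pair $(i,j)$ with $i<j$ has negative sum, giving $\nsp(\gamma)=\binom{n}{2}$. In $\bar\id$ every such pair is already an inversion; each of the $|S|$ disjoint adjacent swaps turns $(s-1,s)$ from inversion to non-inversion while leaving the relative order with any third position $k$ unchanged (both $-(s-1)$ and $-s$ lie on the same side of $-k$). Hence $\inv(\gamma)=\binom{n}{2}-|S|$, and summing yields $\ell_B(\gamma)=n+\binom{n}{2}-|S|+\binom{n}{2}=n^2-|S|$. The main obstacle will be the bookkeeping producing the clean formula for $\gamma$; once $\gamma$ is identified as $\bar\id$ with $|S|$ disjoint adjacent value swaps, the inversion, negative-entry, and negative-sum-pair counts all fall out immediately, with every reduction relying on the admissibility condition $s\in S\Rightarrow s\pm 1\notin S$.
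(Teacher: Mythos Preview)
Your proof is correct and follows the same strategy as the paper: compute $\gamma=\pi^{-1}\sigma$ explicitly, recognize it as $\bar\id$ with the entries at positions $s-1,s$ swapped for each $s\in S$ (i.e.\ $\bar\id[S]$), and then evaluate $\ell_B(\gamma)$. The only cosmetic difference is in the final count---you use the $\negative+\inv+\nsp$ decomposition of Remark~\ref{wordremark} directly, while the paper packages the same count as a per-index sum $\sum (2j-1)$ adjusted at the swapped positions; both give $n^2-|S|$.
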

\begin{proof}
It is straightforward to observe that $\sigma$ and $\pi$ have peak set $S$. Notice that 
\[
\pi^{-1}(k)=\begin{cases}
    -(n-k+1), \text{ if } n-k+1\notin S\\
    n-k+1, \text{ if } n-k+1\in S.
\end{cases}
\]
Computing directly, we obtain that
\[
\pi^{-1}\sigma(j)=\begin{cases}
    -j, \text{ if } j\notin S, j+1\notin S\\
    -j+1, \text{ if } j\in S\\
    -j-1, \text{ if } j+1\in S.
\end{cases}
\]
Hence,
\begin{align*}d_W(\sigma,\pi)&=d_W(\pi^{-1}\sigma,\textbf{e})=\sum_{j\notin S, \ j+1\notin S}(2j-1)+\sum_{j+1\in S}(2j-2)+\sum_{j\in S}(2j-1)\\
&=2\sum_{i=1}^n i-(n-2|S|)-3|S|=n(n+1)-n-|S|\\
&=n^2-|S|.
\end{align*}
\end{proof}

\begin{example}
When $n=8$ and $S=\{2,5\}$ we have $\sigma=\bar{7}86\bar{4}5321$, $\pi=\bar{8}7\bar{6}\bar{5}4\bar{3}\bar{2}\bar{1}$, and $\pi^{-1}=\bar{8}\bar{7}\bar{6}5\bar{4}\bar{3}2\bar{1}$ so that $\pi^{-1} \sigma=\bar{2}\bar{1}\bar{3}\bar{5}\bar{4}\bar{6}\bar{7}\bar{8}$. Note that $\pi^{-1}\sigma$ is $\overline{\textbf{e}}$, the longest element of $S_n^B$, right-multiplied by $s_{j-1}^B$ for each $j\in S$, i.e. $\bar\id[S]$.
\end{example}

\begin{lemma}\label{lem:distanceofopposites}
    Let $\alpha,\beta\in S^B_n$, and call $\overline{\alpha}$ the permutation in $S^B_n$ given by $\overline{\alpha}(j)=-\alpha(j)$. We have
\[
d_W(\alpha,\overline{\alpha})=d_W(\alpha,\beta)+d_W(\overline{\alpha},\beta).
\]
\end{lemma}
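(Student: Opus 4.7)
The plan is to reduce both sides of the claimed equality to the single identity $\ell_B(\gamma)+\ell_B(\overline{\textbf{e}}\gamma)=n^2$, by exploiting two facts: (i) the element $\overline{\textbf{e}}=\bar 1\bar 2\cdots\bar n$ is central in $S_n^B$, and (ii) the word metric is left-invariant (Remark \ref{wordremark}).

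First, I would observe that $\overline{\alpha}=\overline{\textbf{e}}\,\alpha$, which is immediate from $\overline{\textbf{e}}\alpha(j)=-\alpha(j)=\overline{\alpha}(j)$. Moreover, $\overline{\textbf{e}}$ commutes with every $\alpha\in S_n^B$, since the defining property of signed permutations yields $\alpha\overline{\textbf{e}}(k)=\alpha(-k)=-\alpha(k)=\overline{\textbf{e}}\alpha(k)$. Combining these with left-invariance,
\[
d_W(\alpha,\overline{\alpha})=\ell_B(\overline{\alpha}^{-1}\alpha)=\ell_B(\alpha^{-1}\overline{\textbf{e}}\alpha)=\ell_B(\overline{\textbf{e}})=n^2,
\]
where the last equality is Proposition \ref{prop:MaxMinMetrics-Signed}. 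Setting $\gamma=\beta^{-1}\alpha$, the same considerations give $d_W(\alpha,\beta)=\ell_B(\gamma)$ and $d_W(\overline{\alpha},\beta)=\ell_B(\beta^{-1}\overline{\textbf{e}}\alpha)=\ell_B(\overline{\textbf{e}}\gamma)$. Thus the lemma reduces to showing that $\ell_B(\gamma)+\ell_B(\overline{\textbf{e}}\gamma)=n^2$ for every $\gamma\in S_n^B$.

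For this final identity, I would apply the formula $\ell_B(\tau)=\inv_B(\tau)=\negative(\tau)+\inv(\tau)+\nsp(\tau)$ from Remark \ref{wordremark} to both $\tau=\gamma$ and $\tau=\overline{\textbf{e}}\gamma$ (the latter being $\gamma$ with every entry negated). Each of the three terms pairs up cleanly: $\negative(\overline{\textbf{e}}\gamma)=n-\negative(\gamma)$, since negation swaps the roles of positive and negative entries; $\inv(\overline{\textbf{e}}\gamma)=\binom{n}{2}-\inv(\gamma)$, since negation reverses strict inequalities on distinct values; and $\nsp(\overline{\textbf{e}}\gamma)=\binom{n}{2}-\nsp(\gamma)$, using the crucial observation that for $1\le i<j\le n$ the sum $\gamma(i)+\gamma(j)$ is never zero because $|\gamma(i)|\neq|\gamma(j)|$. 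Summing yields $n+2\binom{n}{2}=n^2$, as desired.

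The main conceptual step is recognizing $\overline{\textbf{e}}$ as the central, longest element of $S_n^B$; the only real computation lies in the three-way pairing just described, whose subtlest ingredient is the non-vanishing of $\gamma(i)+\gamma(j)$ for $i\neq j$, which rules out any leakage between the positive-sum and negative-sum cases in the $\nsp$ count.
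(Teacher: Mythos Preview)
Your proof is correct and follows essentially the same route as the paper's: both reduce the identity via left-invariance to $\ell_B(\gamma)+\ell_B(\overline{\gamma})=n^2$ and then verify this by the complementary $\negative$/$\inv$/$\nsp$ counts. Your version is slightly more explicit in invoking the centrality of $\overline{\textbf{e}}$ and in noting that $\gamma(i)+\gamma(j)\neq 0$ for $i\neq j$ (which justifies the $\nsp$ complement), but the argument is the same.
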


\begin{proof} By the left-invariance of the word metric, this is equivalent to showing
\[
d_W(\textbf{e},\overline{\textbf{e}})=d_W(\textbf{e},\alpha^{-1}\beta)+d_W(\overline{\textbf{e}},\alpha^{-1}\beta).
\]
We prove the following more general statement: given $\gamma\in S^B_n$,
\[
d_W(\textbf{e},\overline{\textbf{e}})=d_W(\textbf{e},\gamma)+d_W(\overline{\textbf{e}},\gamma).
\]
This, in turn, is equivalent to proving
\[
n^2=d_W(\textbf{e},\gamma)+d_W(\textbf{e},\overline{\gamma}).
\]
Recall that $d_W(\textbf{e},\gamma)=\ell_B(\gamma)$ for any $\gamma\in S^B_n$, where $\ell_B$ is the length of the signed permutation. As mentioned in Remark \ref{wordremark},  $\ell_B(\gamma)=\inv_B(\gamma)$, where 
\[
\inv_B(\gamma)=\negative(\gamma(1),\ldots,\gamma(n))+\inv(\gamma(1),\ldots,\gamma(n))+\nsp(\gamma(1),\ldots,\gamma(n))=J+K+L.\] 
Then 
\begin{align*}
d_W(e,\overline{\gamma})&=\inv_B(\overline{\gamma})=\negative(-\gamma(1),\ldots,-\gamma(n))+\inv(-\gamma(1),\ldots,-\gamma(n))+\nsp(-\gamma(1),\ldots,-\gamma(n))\\
&=(n-J)+ \left(\binom{n}{2}-K\right)+\left(\binom{n}{2}-L\right).
\end{align*}

Consequently, $d_W(\textbf{e},\gamma)+d_W(\textbf{e},\overline{\gamma})=n+2\binom{n}{2}=n^2$.
\end{proof}

\begin{theorem}\label{T:maxword}
    Given $\sigma,\pi\in P^{B}(S;n)$, we have $d_W(\sigma,\pi)\le n^2-|S|$.
\end{theorem}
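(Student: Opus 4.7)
The plan is to invoke Lemma \ref{lem:distanceofopposites} with $\alpha = \sigma$ and $\beta = \pi$, which yields $d_W(\sigma, \pi) + d_W(\pi, \bar\sigma) = n^2$. Hence the desired upper bound $d_W(\sigma, \pi) \le n^2 - |S|$ is equivalent to the lower bound $d_W(\pi, \bar\sigma) \ge |S|$. Since $\sigma \in P^B(S;n)$, for each $k \in S$ the inequalities $\sigma(k-1) < \sigma(k) > \sigma(k+1)$ negate to $\bar\sigma(k-1) > \bar\sigma(k) < \bar\sigma(k+1)$, so $\bar\sigma$ has a valley at every $k \in S$.

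The problem thus reduces to proving the following key statement: whenever $\pi \in P^B(S;n)$ and $\rho \in S^B_n$ has a valley at each $k \in S$, one has $d_W(\pi, \rho) \ge |S|$. To establish this, I would expand $d_W(\pi, \rho) = \ell_B(\rho^{-1}\pi) = \inv(\rho^{-1}\pi) + \nsp(\rho^{-1}\pi) + \negative(\rho^{-1}\pi)$ via Remark \ref{wordremark}, equivalently counting positive roots of type $B_n$ sent to negative by $\rho^{-1}\pi$, and show that at least $|S|$ such inversion-roots exist. For each $k \in S$, the combined peak-versus-valley mismatch at position $k$ suggests a candidate positive root built from $e_{k-1}, e_k, e_{k+1}$ whose image under $\rho^{-1}\pi$ is negative; one case-splits on the signs and relative magnitudes of $\rho^{-1}(\pi(k-1))$, $\rho^{-1}(\pi(k))$, $\rho^{-1}(\pi(k+1))$ to locate an inversion in one of the pairs $(k-1,k)$ or $(k,k+1)$, an $\nsp$-pair among those positions, or a negative entry at index $k$.

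The main obstacle is that a purely local per-peak attribution does not always succeed: in some configurations a given peak $k$ yields no locally-supported $B$-inversion, yet the total count of $B$-inversions of $\rho^{-1}\pi$ is still at least $|S|$, produced by contributions drawn from the neighborhoods of other peaks. A robust proof therefore likely requires an aggregate counting argument, together with careful bookkeeping when peaks lie at the minimum admissible separation of $2$, so that the shared index $k+1 = k'-1$ for two peaks $k, k' = k+2$ is not double-counted. Organizing this redistribution of inversion-roots across the peaks of $S$, so that the total is at least $|S|$ in every admissible configuration, is the technical heart of the argument.
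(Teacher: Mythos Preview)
Your reduction is exactly the paper's: apply Lemma~\ref{lem:distanceofopposites} to convert the upper bound $d_W(\sigma,\pi)\le n^2-|S|$ into the lower bound $d_W(\pi,\bar\sigma)\ge|S|$ (the paper uses $\alpha=\pi$ rather than $\alpha=\sigma$, which is immaterial), and then observe that one of the two permutations has a peak at every $k\in S$ while the other has a valley there.

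Where you diverge is in the treatment of that final lower bound. The paper dispatches it in a single sentence: since $\sigma$ has a local maximum at each $k\in S$ and $\bar\pi$ has a local minimum at each such $k$, ``there is no way to do less than $|S|$ transpositions/sign changes to go from one to the other.'' No root-system expansion, no per-peak inversion attribution, no redistribution argument---just the heuristic that each of the $|S|$ mismatches costs at least one generator. Your proposal to expand $\ell_B(\rho^{-1}\pi)=\inv+\nsp+\negative$ and locate a type-$B$ inversion near each peak is considerably more elaborate than what the paper actually does, and your own observation that a purely local attribution can fail (forcing an aggregate count with attention to peaks at distance~$2$) is a genuine subtlety that the paper's one-line argument does not address. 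In short: your strategy coincides with the paper's, but on the crux step you are attempting a level of rigor the paper does not supply; if you want a complete proof you will indeed need to carry out the aggregate bookkeeping you describe, since the paper's version is closer to an assertion than an argument.
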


\begin{proof}
Assume for the sake of contradiction that $d_W(\sigma,\pi)>n^2-|S|$, so $d_W(\sigma,\pi)\ge n^2-|S|+1$.

By the left-invariance of the word metric, we have $d_W(\pi,\overline{\pi})=d_W(\textbf{e},\pi^{-1}\overline{\pi})=d_W(\textbf{e},\overline{\textbf{e}})=n^2.$
Using Lemma \ref{lem:distanceofopposites}, 
\[
n^2=d_W(\pi,\overline{\pi})=d_W(\pi,\sigma)+d_W(\overline{\pi},\sigma)\ge n^2-|S|+1+d_W(\overline{\pi},\sigma).
\]
Hence, $d_W(\overline{\pi},\sigma)\le |S|-1,$ leading to a contradiction: $\sigma$ has a local maximum at each element in $S$ (and there are $|S|$ such elements), while $\overline{\pi}$ has a local minimum at each of those elements. Therefore, there is no way to do less than $|S|$ transpositions/sign changes to go from one to the other.
\end{proof}

\section{Future Directions}

There are many directions for further research on this topic, including the study of these metrics for the Coxeter group of type D. Another interesting direction would be to fix a peak set $S$ and enumerate how many pairs of signed permutations achieve the minimum and maximum for each metric. Similarly, one might consider exploring whether all intermediate values are achieved for each of the metrics. We also only considered a limited collection of metrics on signed permutations. One might consider looking at other metrics, such as looking at the word metric with respect to set of signed prefix reversals.

\bibliographystyle{plain}
\bibliography{bibliography}

\end{document}